\date{}
\theoremstyle{plain}
      \newtheorem{theorem}{Theorem}[section]
      \theoremstyle{definition}
      \theoremstyle{remark}
\numberwithin{equation}{section}  
\title[On  a representation of Humbert's double hypergeometric series]{\bf  ON A REPRESENTATION OF HUMBERT'S DOUBLE HYPERGEOMETRIC SERIES $\Phi_2$ IN A SERIES OF GAUSS'S  $_2F_1$ FUNCTION} 
\author[Arjun K. Rathie]{\bf  Arjun K. Rathie } 
\address{ Arjun K. Rathie, Department of Mathematics, School of Mathematical and Physical Sciences, Central University of Kerala, Riverside Transit Campus,
Padennakkad P.O. Nileshwar, Kasaragod- 671 328, Kerala - INDIA}
\email{akrathie@gmail.com}
\begin{document}
\maketitle

\begin{abstract}
In this paper, we aim to obtain a  representation of Humbert's hypergeometric function $\phi_2$  in a series of Gauss's function $_{2}F_1$. A few interesting results have also been deduced as  special case of our main findings.
\end{abstract}

{\itshape Keywords:} hypergeometric function; Humbert's function $\phi_2$, Kummer's summation theorem

{\itshape 2010 AMS Subject Classification :} 33C15,  35C70.

\section{Introduction}

We start with the one of the Humbert's function $\phi_2$ defined by [2,3]
\begin{equation} \label{phi2}
\phi_2(a, b; c; x,y) = \sum_{n=0}^\infty \sum_{k=0}^\infty \frac{(a)_n (b)_k}{(c)_{n+k}} . \frac{x^n y^k}{n! \: k!}
\end{equation}
The series \eqref{phi2} converges absolutely for all $x, y \in \mathbb{C}$.\\

In the theory of hypergeometric series, classical summation theorems such as those of Gauss's summation theorem[4]
\begin{equation} \label{gauss-summation}
_{2}F_1\left[\begin{array}{c}a, b\\ c \end{array} ; 1\right] = \frac{\Gamma(c) \Gamma(c-a-b)}{\Gamma(c-a) \Gamma(c-b)}
\end{equation}
provided $\Re(c-a-b) > 0$.\\
and Kummer's summation theorem[4]
\begin{equation} \label{kummer-summation}
_{2}F_1\left[\begin{array}{c}a, b\\ 1+a-b \end{array} ; -1\right] = \frac{\Gamma(1+\frac{1}{2}a) \Gamma(1+a-b)}{\Gamma(1+a) \Gamma(1+\frac{1}{2}a-b)}
\end{equation}
play an important role.\\

The aim of this short research note is to find a  representation of Humbert's hypergeometric function $\phi_2$ in a series of Gauss's function $_{2}F_1$. In the end,  a few   interesting results have also been obtained as   special case of our main findings.
\section{MAIN RESULT}
The result to be proved in this short research note is given in the following theorem.\\
\begin{theorem}
For $c \neq 0, -1, -2, \cdots$, the following result holds true.
\begin{equation} \label{main}
\phi_2(a, b; c; x,y)  = \sum_{m=0}^\infty \frac{(a)_m}{(c)_m} \; _{2}F_1\left[\begin{array}{c}-m, b\\ 1-a-m \end{array} ; \frac{y}{x}\right] \frac{x^m}{m!}
\end{equation}
\end{theorem}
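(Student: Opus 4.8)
The plan is to start from the defining double series \eqref{phi2} and to reorganize it by collecting, for each fixed total degree $m$, all monomials $x^n y^k$ with $n+k=m$. Since the series \eqref{phi2} converges absolutely for all $x,y\in\mathbb{C}$, this regrouping is legitimate and leaves the value unchanged. Writing $n=m-k$ with $0\le k\le m$, the coupling factor $(c)_{n+k}$ collapses to $(c)_m$ and may be pulled out of the inner sum, so that
\[
\phi_2(a,b;c;x,y)=\sum_{m=0}^{\infty}\frac{1}{(c)_m}\sum_{k=0}^{m}\frac{(a)_{m-k}(b)_k}{(m-k)!\,k!}\,x^{m-k}y^k.
\]
The task then reduces to recognising the finite inner sum as a terminating Gauss function.

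First I would rewrite the two index-dependent factors in the inner sum using standard Pochhammer reflection identities, namely
\[
(a)_{m-k}=\frac{(-1)^k (a)_m}{(1-a-m)_k}\qquad\text{and}\qquad \frac{1}{(m-k)!}=\frac{(-1)^k (-m)_k}{m!},
\]
both valid for $0\le k\le m$. Substituting these, the two factors $(-1)^k$ cancel and the common factor $(a)_m x^m/m!$ can be taken outside, leaving the inner sum in the form $\sum_{k=0}^{m}\frac{(-m)_k (b)_k}{(1-a-m)_k}\frac{(y/x)^k}{k!}$, where I have used $x^{m-k}y^k=x^m (y/x)^k$. Because $(-m)_k$ vanishes for $k>m$, this finite sum is precisely the terminating series for $\,_2F_1[-m,b;1-a-m;y/x]$, and reassembling the outer sum over $m$ yields exactly \eqref{main}.

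The routine but essential part of the argument is the Pochhammer bookkeeping, in particular the reflection formula $(a)_{m-k}=(-1)^k(a)_m/(1-a-m)_k$, which is what converts the descending index $m-k$ into the ascending parameters $-m$ and $1-a-m$ of the Gauss function; I would verify it directly from $(a)_m=(a)_{m-k}(a+m-k)_k$ together with $(1-a-m)_k=(-1)^k(a+m-k)_k$. The only points deserving a word of care are the appeal to absolute convergence that justifies the regrouping by total degree, and the implicit assumption $x\neq 0$ needed merely to display the argument $y/x$; the latter is harmless, since the products $x^m (y/x)^k=x^{m-k}y^k$ recombine to genuine monomials, so that \eqref{main} is in fact an equality of term-by-term coefficients.
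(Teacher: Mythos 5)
Your proposal is correct and follows essentially the same route as the paper: regrouping the double series by total degree (the paper's appeal to Rainville's rearrangement lemma), applying the identical Pochhammer identities $(a)_{m-k}=(-1)^k(a)_m/(1-a-m)_k$ and $1/(m-k)!=(-1)^k(-m)_k/m!$, and recognising the terminating $\,_2F_1$. Your added remarks on absolute convergence justifying the regrouping and on the harmless $x\neq 0$ assumption are sound refinements of the same argument, not a different method.
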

\begin{proof}
In order to prove the result \eqref{main}, we proceed as follows. Denoting the left-hand side of \eqref{main} by S, we have upon using  \eqref{phi2}
\begin{equation*}
S =   \sum_{m=0}^\infty \sum_{n=0}^\infty \frac{(a)_{m} (b)_n}{(c)_{m+n}} . \frac{x^{m} y^n}{(m)! \: n!}
\end{equation*}   
Now replacing $m$ by $m-n$ and using a known result[4, lemma 10, p.56, eqn.(1)]
\begin{equation*}
  \sum_{n=0}^\infty \sum_{k=0}^\infty A(k,n) =    \sum_{n=0}^\infty \sum_{k=0}^n A(k,n-k)
\end{equation*} 
we have
\begin{equation*}
S =   \sum_{m=0}^\infty \sum_{n=0}^m \frac{(a)_{m-n} (b)_n}{(c)_{m}} . \frac{x^{m-n} y^n}{(m-n)! \: n!}
\end{equation*}   
Using the elementary identites
\begin{equation*}
(a)_{m-n} = \frac{(-1)^n (a)_m}{(1-a-m)_n}
\end{equation*}
and 
\begin{equation*}
(m-n) ! = \frac{(-1)^n m!}{(-m)_n}
\end{equation*}
we have, after some simplification
\begin{equation*}
S =   \sum_{m=0}^\infty  \frac{(a)_{m}}{(c)_{m}} \frac{x^{m}}{m!} . \sum_{n=0}^m  \frac{(-m)_{n}(b)_{n}}{(1-a-m)_n } \left(\frac{y}{x}\right)^n
\end{equation*}   
summing up the inner series, we easily arrive at the right-hand side of  \eqref{main}. This completes the proof of the theorem.
\end{proof}
\section{SPECIAL CASES}
In this section we shall mention a few interesting  special cases of our theorem.\\
\textbf{1.}  In \eqref{main}, if we take $y=x$, we have 
\begin{equation} \label{mainsc1}
\phi_2(a, b; c; x,x)  = \sum_{m=0}^\infty \frac{(a)_m}{(c)_m} \; _{2}F_1\left[\begin{array}{c}-m, b\\ 1-a-m \end{array} ; 1\right] \frac{x^m}{m!}
\end{equation}
We observe here that the $_{2}F_1$ appearing on the right-hand side can be evaluated with the help of classical Gauss's summation theorem \eqref{gauss-summation} and after some simplification, we get 
\begin{equation*} \label{mainsc1a}
\phi_2(a, b; c; x,x)  =  _{1}F_1\left[\begin{array}{c} a+b\\ c \end{array} ; x\right] 
\end{equation*}
a result recently obtained by Manako[2, p.506, eqn.(10)] and also recorded in [5, p.322, eqn(187)].\\
\textbf{ 2.} In \eqref{main} if we take $b=a$ and $y=-x$, we get
\begin{equation} \label{mainsc2}
\phi_2(a, a; c; x, -x)  = \sum_{m=0}^\infty \frac{(a)_m}{(c)_m} \; _{2}F_1\left[\begin{array}{c}-m, a\\ 1-a-m \end{array} ; -1\right] \frac{x^m}{m!}
\end{equation}
Again, we observe that the $_{2}F_1$ appearing on the right-hand side can be evaluated with the help of classical Kummer's summation theorem \eqref{kummer-summation} and after some simplification, we get
\begin{equation} \label{mainsc2a}
\phi_2(a, a; c; x, -x)  = \;  _{1}F_2\left[\begin{array}{c} a\\ \frac{1}{2}c , \frac{1}{2}c+\frac{1}{2} \end{array} ; \frac{x^2}{4}\right] 
\end{equation}
a result recently obtained by Manako[2, p.507, eqn.(11)].\\
In particular in \eqref{mainsc2a}, if we take $c= 2a$,  we get 
\begin{equation} \label{mainsc2b}
\phi_2(a, a; 2a; x, -x)  = \;  _{0}F_1\left[\begin{array}{c} - \\ a+\frac{1}{2} \end{array} ; \frac{x^2}{4}\right] 
\end{equation}
Similarly other results can be obtained.
\section*{Concluding Remark}
In this research note, we have obtained the result\\\\  
\textbf{(A)} $ \; \phi_2(a, b; c; x,y)  = \sum_{m=0}^\infty \frac{(a)_m}{(c)_m} \; _{2}F_1\left[\begin{array}{c}-m, b\\ 1-a-m \end{array} ; \frac{y}{x}\right] \frac{x^m}{m!}$\;, 
for $c \neq 0, -1, -2, \cdots$\\\\
A similar result for $\Psi_2$ has recently been  obtained by Manako[2, p.506, eqn.(6)] as \\\\
\textbf{(B)} $ \; \Psi_2(a; b, c; x,y)  = \sum_{n=0}^\infty \frac{(a)_n}{(b)_n} \; _{2}F_1\left[\begin{array}{c}-n, -n-b+1\\ c \end{array} ; \frac{y}{x}\right] \frac{x^n}{n!}$\\\\
We conclude this research note  by remarking that by employing the results (A) and (B), explicit expressions of Humbert's functions $\phi_2$ and $\Psi_2$ in the general form
\begin{enumerate}[(i)]
 \item $ \phi_2(a, a+i ; c ; x , -x)$ \\  
 and 
\item  $ \Psi(a; c , c+i;    x , -x)$ 
\end{enumerate}
each for $ i = 0, \pm 1, \ldots, \pm 5$\\ 
are under investigations and  will be published soon.

\end{document}